\def\NP{{\mathbf{NP}}}
\def\MN{{\mathbb{N}}}
\def\MZ{{\mathbb{Z}}}
\newtheorem{theorem}{Theorem}
\newtheorem{lemma}[theorem]{Lemma}
\newtheorem{proposition}[theorem]{Proposition}
\newtheorem{corollary}[theorem]{Corollary}
\newtheorem{remark}[theorem]{Remark}
\theoremstyle{definition}
\newcommand{\gpr}[2]{{\left\langle #1 \mid #2 \right\rangle}}
\let\oldmarginpar\marginpar
\renewcommand\marginpar[1]{\-\oldmarginpar[\raggedleft\footnotesize #1]%
{\raggedright\footnotesize #1}}
\DeclareMathOperator{\SL}{SL}
\def\ovx{{\overline{x}}}
\DeclareMathOperator{\Mat}{{Mat}}
\def\P{{\mathbf{P}}}
\def\NP{{\mathbf{NP}}}
\def\SSP{{\mathbf{SSP}}}
\def\ZOE{{\mathbf{ZOE}}}
\title{Subset sum problem in polycyclic groups}
\author[]{Andrey Nikolaev and Alexander Ushakov}
\address{Andrey Nikolaev, Stevens Institute of Technology, Hoboken, NJ, 07030 USA}
\email{anikolae@stevens.edu}
\address{Alexander Ushakov, Stevens Institute of Technology, Hoboken, NJ, 07030 USA}
\email{aushakov@stevens.edu}
\begin{document}

	
%

\maketitle

\begin{abstract} We consider a group-theoretic analogue of the classic subset sum problem. 
It is known that every virtually nilpotent group has polynomial time decidable subset sum problem.
In this paper we use subgroup distortion to show that every polycyclic non-virtually-nilpotent 
group has $\NP$-complete subset sum problem.

\noindent
{\bf Keywords.}
Polycyclic groups, nilpotent groups, subset sum problem, $\NP$-completeness.

\noindent
{\bf 2010 Mathematics Subject Classification.} 03D15, 20F65, 20F10, 20F16.

\end{abstract}
\maketitle

\section{Introduction}\label{sec:intro}

The study of discrete optimization problems in groups was initiated in~\cite{Miasnikov-Nikolaev-Ushakov:2014a}, 
where the authors introduced group-theoretic generalizations of the classic knapsack problem 
and its variations, e.g., subset sum problem and bounded submonoid membership problem. 
In the subsequent papers~\cite{Miasnikov-Nikolaev-Ushakov:2014b} and~\cite{Myasnikov-Nikolaev-Ushakov:2016}, the authors studied generalizations of the Post corresponce problem and classic lattice problems in groups.
The investigation of knapsack-type problems in groups continued in 
papers~\cite{Frenkel-Nikolaev-Ushakov:2014}, \cite{Konig-Lohrey-Zetzsche:2015}, \cite{Lohrey-Zetzsche:2015}, \cite{Misch-Tr:2016a}, \cite{Misch-Tr:2016b}.
The computational properties of these problems, 
aside from being interesting in their own right, 
were shown to be closely related to a wide range of well-known geometric and algorithmic properties of groups. 
For instance, the complexity of knapsack-type problems 
in certain groups depends on geometric features 
of a group such as growth, subgroup distortion, and negative curvature. 
The Post correspondence problem in $G$ is closely related to twisted conjugacy problem in $G$, equalizer problem in $G$, and a strong version of the word problem. Furthermore, lattice problems are related to the classic subgroup membership problem
and finite state automata. We refer the reader to the aforementioned papers for details.

In this paper, we prove $\NP$-completeness of the subset sum problem in any virtually polycyclic non-virtually-nilpotent group by exploiting properties of exponentially distorted subgroups. This highlights a notable connection between geometric and combinatorial properties of polycyclic groups.

\subsection{Subset sum problem}

Let $G$ be a group generated by a finite set 
$X=\{x_1,\ldots,x_n\}\subseteq G$. Elements in $G$ can be expressed
as products of the generators in $X$ and their inverses.
Hence, we can state the following combinatorial problem.

\medskip
\noindent{\bf The subset sum  problem $\SSP(G,X)$\index{$\SSP(G,X)$}:}
Given $g_1,\ldots,g_k,g\in G$ decide if
  \begin{equation} \label{eq:SSP-def}
  g = g_1^{\varepsilon_1} \ldots g_k^{\varepsilon_k}
  \end{equation}
for some $\varepsilon_1,\ldots,\varepsilon_k \in \{0,1\}$.

\medskip
By \cite[Proposition 2.5]{Miasnikov-Nikolaev-Ushakov:2014a}
computational properties of $\SSP$ do not depend on the choice of a finite generating set $X$
and, hence, the problem can be abbreviated as $\SSP(G)$.
Also, the same paper provides a variety of examples
of groups with  $\NP$-complete (or polynomial time) subset sum problems.
For instance, $\SSP$ is $\NP$-complete for the following groups:
\begin{enumerate}[(a)]
\item 
abelian group $\MZ^\omega$;
\item 
free metabelian non-abelian groups;
\item 
wreath products of finitely generated infinite abelian groups;
\item 
metabelian Baumslag--Solitar groups $BS(m, n)$ with $0\ne m\ne n\ne 0$;
\item 
metabelian group
$GB =\gpr{a,s,t}{[a, a^t ] = 1, [s, t] = 1, as = aa^t}$;
\item 
Thompson's group $F$.
\end{enumerate}
One can observe that in a number of the above examples, $\NP$-completeness of $\SSP$ is 
a consequence of exponential subgroup distortion. Recently, K\"onig, Lohrey, and Zetzsche 
in~\cite{Konig-Lohrey-Zetzsche:2015} 
found a polycyclic group $G$ with $\NP$-complete $\SSP$. Upon inspection of the proof, one can notice 
that exponential subgroup distortion plays a key role in their argument, as well. 
In the present work, we show that this is not coincidental.
Specifically, in the case of polycyclic groups, we investigate subgroup distortion 
to give a complete description of $\SSP$. We determine out that $\SSP$ in a polycyclic group 
is $\NP$-complete if the group is not virtually nilpotent, and polynomial time decidable if it is. The latter is known by~\cite{Miasnikov-Nikolaev-Ushakov:2014a}; it is also shown by~\cite{Konig-Lohrey-Zetzsche:2015} that for virtually nilpotent groups, $\SSP$ is decidable in nondeterministic logspace. Our work is heavily inspired by results of~\cite{Osin:2002}, even though we do not, strictly speaking, rely on them.


The authors express their gratitude to A.~Miasnikov and M.~Sohrabi 
for insightful discussions and advice.

\subsection{Zero-one equation problem}\label{sub:zoe}

Recall that a vector $v \in \MZ^n$ is called a {\em zero-one} vector
if each entry in $v$ is either $0$ or $1$.
Similarly, a square matrix  $A\in \Mat(n,\MZ)$ is called a {\em zero-one}
matrix if each entry in $A$ is either $0$ or $1$.
Let $1^n$ denote the vector $(1,\ldots,1) \in \MZ^n$.
The following problem is $\NP$-complete  
(see \cite[Section 8.3]{Dasgupta-Papadimitriou-Vazirani:2006}).

\medskip\noindent
{\bf Zero-one equation problem (ZOE):}
Given a zero-one matrix $A \in \Mat(n,\MZ)$ decide
if there exists a zero-one vector $\ovx \in \MZ^n$ satisfying
$A\cdot \ovx = 1^n$, or not.

\section{Preliminary results in nilpotent groups}
\label{th:norm_forms}

In this section, we prove a technical result on nilpotent groups (Proposition~\ref{pr:permutation}) 
that we use later in the proof of $\NP$-hardness in Section~\ref{sec:general_case}.


Let $H$ be a finitely generated nilpotent group. A choice of a Malcev basis provides normal forms for elements in $H$. Furthermore, it is well known that the length
of the normal form for the element $w$ is bounded by $p(|w|)$
where $p$ is a polynomial (see, for example,~\cite{Hal69}). 
The polynomial~$p$
depends on~$H$ and the choice of a Malcev basis.
Below we establish a similar uniform 
(with respect to the number of generators) 
bound for all finitely generated free nilpotent groups of class $c$. 

Let $H=N_{r,c}$ be the free nilpotent group of rank $r$ and class $c$.
Put $y_{11}=x_1,\ldots, y_{1r}=x_r$. Further, let $y_{i1},\ldots,y_{ij_i}$ 
be the iterated commutators of weight $i$ that constitute a basis for the $i$th quotient 
of the derived series of $H$, $\gamma_{i}(H)/\gamma_{i+1}(H)$.  
For convenience rename them $y_1,\ldots,y_m$ so that this list is ordered by 
weight and $y_1=x_1,\ldots, y_r=x_r$. This tuple is called a 
\emph{Malcev basis of iterated commutators} for $H$. 
Below we use the enumerations $y_{ij}$ and $y_i$ interchangeably, 
as convenient in each particular case.

The following theorem is well known in the case of a fixed rank. 
The proof below essentially repeats the one in~\cite{MMNV} 
(which, in turn, is adapted from unpublished lecture notes by 
C.~Dru\c{t}u and M.~Kapovich), accounting for a variable number of generators. The key observation in the proof is that commuting iterated commutators of weights $a$ and $b$ produces iterated commutators of weights at least $a+b$. Therefore, each particular commutator can be iterated during the collection process not more than $c$ times (less depending on the involved weights), which puts a bound on the number of iterated commutators of each particular weight in the final expression.

\begin{theorem}[Uniform length bound for free nilpotent groups]
\label{th:nilp_growth} 
For each integer $c\ge 1$ there exists a polynomial $P$ for which the following takes place. Let $N_{r,c}$ be the free nilpotent group of rank $r$ and class $c$, and let $w$ be a group word in generators $x_1,\ldots,x_r$ of $N_{r,c}$. Let $y_1,\ldots,y_m$ be a Malcev basis of iterated commutators for $N_{r,c}$. Then there are integers $\alpha_1,\ldots,\alpha_m$ such that
\begin{equation}\label{eq:malcev_coords}
w=y_1^{\alpha_1}\cdots y_m^{\alpha_m}
\end{equation}
in $N_{r,c}$ and $|\alpha_i|\le P(|w|)$ for $i=1,\ldots,m$.
\end{theorem}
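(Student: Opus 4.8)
The plan is to carry out the standard commutator collection process and track, uniformly in the rank $r$, how large the resulting Malcev exponents $\alpha_i$ can become. The starting point is that any group word $w$ of length $n=|w|$ is literally a product of $n$ letters, each a generator $x_j^{\pm 1} = y_j^{\pm 1}$. So before collection begins we have an expression as a product of at most $n$ basis elements of weight $1$, and the exponent-sum bound is trivially $\le n$ at weight $1$. The collection process then repeatedly takes a product in which a higher-indexed generator appears to the left of a lower-indexed one and swaps them, paying for the swap with a commutator $[y_b, y_a]$, which by the choice of Malcev basis rewrites (in $N_{r,c}$) as a product of basis elements $y_\ell$ of strictly greater weight. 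The central structural fact, flagged in the paragraph preceding the theorem, is that commuting iterated commutators of weights $a$ and $b$ produces iterated commutators of weight at least $a+b$; since the class is $c$, any iterated commutator of weight exceeding $c$ is trivial, so the recursion terminates after finitely many weight levels.

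**The key quantitative step** is to bound, level by level, the total number of syllables produced. I would argue by induction on the weight $i$ from $1$ to $c$. Let $M_i$ denote the maximal number of syllables of weight $i$ that can appear during collection. At weight $1$ we have $M_1 \le n$. When collecting weight-$i$ syllables past one another, each swap injects syllables of weight $> i$; the number of such swaps needed to sort the weight-$i$ block is at most quadratic in the number of weight-$i$ syllables present, i.e. $O(M_i^2)$, and each swap contributes a bounded number of higher-weight syllables (the constant depending on how a single basic commutator $[y_b,y_a]$ expands as a word in the $y_\ell$, which depends on $c$ but not on $r$). Propagating this through the $c$ levels gives $M_{i+1} \le q_i(M_1,\ldots,M_i)$ for fixed polynomials $q_i$ depending only on $c$, and composing these $c$ times yields a single polynomial bound $M_i \le P_0(n)$ valid at every level, with $P_0$ depending only on $c$. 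Since each final exponent $\alpha_i$ is (up to sign) bounded by the number of syllables $y_i$ collected into the final normal form, we get $|\alpha_i| \le M_{\mathrm{wt}(i)} \le P(|w|)$ for a polynomial $P=P(c)$ independent of $r$, which is exactly the claim.

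**The main obstacle** is ensuring the polynomial is genuinely uniform in the rank $r$: a priori the number of distinct basic commutators $m$ grows with $r$, so one must verify that the per-swap cost and the combinatorics of the collection process do not smuggle in a dependence on $r$. The crucial observation that rescues uniformity is that a single swap $y_b y_a \to y_a y_b \cdot (\text{higher-weight terms})$ expands $[y_b,y_a]$ into a product whose \emph{length} (number of $y_\ell$ factors) is bounded by a function of $c$ alone — because the nesting depth of iterated commutators is capped by $c$ — even though \emph{which} basis elements appear depends on $a,b,r$. Thus the recursion $M_{i+1}\le q_i(\ldots)$ has coefficients and degrees controlled by $c$ but not $r$. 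I would take care to state the rewriting rule for $[y_b,y_a]$ precisely, to confirm that collecting a fixed-weight block requires only polynomially many swaps in the count of syllables already present (not in $r$), and to check that weights strictly increase under bracketing so that the induction over the finitely many levels $1,\ldots,c$ terminates and composes into a single polynomial $P$.
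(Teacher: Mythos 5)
Your proposal follows essentially the same route as the paper: commutator collection, with rank-independence secured by the observation that each swap's correction term has length bounded by a constant depending only on $c$ (the paper makes this precise by specializing finitely many universal identities from $N_{2c,c}$), followed by a weight-level-by-level polynomial recursion on syllable counts. One small accounting slip: sorting the weight-$i$ block requires moving each weight-$i$ syllable past \emph{all} syllables to its left, including higher-weight ones, so the swap count is $M_i$ times the total current length rather than $O(M_i^2)$ --- but that quantity is still polynomially bounded by the same induction, so the conclusion stands.
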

\begin{proof}
Consider the free nilpotent group $N_{2c,c}$ generated by $z_1,\ldots,z_c$ and $w_1,\ldots,w_c$ and all possible expressions
\begin{equation}\label{eq:commutator_z}
t's'=s't'\cdot u',
\end{equation}
where
\[
s'=[\ldots[z_{1}^{\pm1},z_{2}^{\pm1}],\ldots,z_{k}^{\pm1}]^{\pm1}\mbox { and } t'=[\ldots[w_1^{\pm1},w_{2}^{\pm1}],\ldots,w_{l}^{\pm1}]^{\pm1},
\]
with $u'$ expressed as a product of commutators of weight at least $k+l$. Let $C_0$ be the maximum of the finitely many possible lengths of $u'$ for all $1\le k<l\le c$.

For such constant $C_0$ (note that it depends only on the nilpotency class $c$) the following takes place. Let $s=y_i^{\pm 1}\in N_{r,c}$ and $t=y_j^{\pm 1}\in N_{r,c}$ be of weights $k<l$, respectively,
\[
s=[\ldots[x_{i_1}^{\pm1},x_{i_2}^{\pm1}],\ldots,x_{i_k}^{\pm1}]^{\pm1}\mbox { and } t=[\ldots[x_{j_1}^{\pm1},x_{j_2}^{\pm1}],\ldots,x_{j_l}^{\pm1}]^{\pm1}.
\]
Specializing equality~\eqref{eq:commutator_z} by $z_{\nu}\to x_{i_\nu}$ and $w_{\nu}\to x_{j_\nu}$ we obtain that
\begin{equation}\label{eq:commutator}
ts=st\cdot u,
\end{equation}
where $u$ is word in iterated commutators $y_i$ in $N_{r,c}$ of weight at least $k+l$ and length at most~$C_0$.

Now, we turn to processing the word $w$. For a group word $v$ in the alphabet $y_{ij}$, by $|v|_i$ we denote the total number of occurrences of letters $y_{11}^{\pm 1},\ldots, y_{ij_i}^{\pm 1}$ in $v$. For example, since $w$ is a word in the free generators of $N_{r,c}$, we have $|w|_1=|w|$; for $v=[x_1,x_2]=y_{2j}$ we have $|v|_1=0$ and $|v|_2=1$. Formally, $|w|_{\le 0}=0$ and $|w|_{>c}=|w|_c$. We rewrite the word $w$ in $c$ steps. On step $1\le n\le c$ we obtain an expression
\[
w=y_{11}^{\alpha_{11}}\cdots y_{nj_n}^{\alpha_{nj_n}}\cdot w_{n+1},
\]
where the left factor is the initial part of the desired right hand side of~\eqref{eq:malcev_coords} up to $y_i$ of weight $n$, will all involved $\alpha_{ij}$ satisfying $|\alpha_{ij}|\le P(|w|)$, and $w_{n+1}$ is a word in $y_{(n+1)1},\ldots,y_{cj_c}$ with
\begin{equation}\label{eq:collection_growth}
|w_{n+1}|_{n+\ell}\le C_n|w|^{n+\ell}\ \mbox{ for each }\ \ell=1,\ldots,c-n.
\end{equation}
Below we describe step $n$, assuming step $n-1$ is performed. The trivial step~$0$ serves as base of induction.

Thus, let $w^{(0)}=w_n$ be a word in $y_{n1},\ldots,y_{cj_c}$ as described above. Pick an occurrence of $y_{n1}^{\pm 1}$ and push it to the left using~\eqref{eq:commutator}, $w^{(0)}=y_{n1}^{\pm 1}w^{(1)}$. By~\eqref{eq:commutator} we have
\[
|w^{(1)}|_{n+\ell}\le |w^{(0)}|_{n+\ell}+C_0|w^{(0)}|_{\ell}.
\]
Proceed in the same way with all occurrences of $y_{n1}^{\pm 1}$, then all occurrences of $y_{n2}^{\pm 1}$, and so on, as follows. Given $w^{(i)}$, we express it as $w^{(i)}=y^{\pm1}_{nk_i}w^{(i+1)}$, where $\{k_i\}$ is a non-decreasing sequence and
\begin{equation}\label{eq:pascal}
|w^{(i+1)}|_{n+\ell}\le |w^{(i)}|_{n+\ell}+C_0|w^{(i)}|_{\ell}.
\end{equation}
After $N=|w_n|_{n}$ repetitions we have
\[
w_n=y_{n1}^{\alpha_{n1}}\cdots y_{nj_n}^{\alpha_{nj_n}}\cdot w^{(N)},
\]
where $|\alpha_{ni}| \le |w_n|_n\le C_n|w|^n$. Now we only have to verify that $w_{n+1}=w^{(N)}$ satisfies condition~\eqref{eq:collection_growth}. Indeed, by repeatedly applying~\eqref{eq:pascal} we see that
\begin{eqnarray*}
|w^{(N)}|_{n+\ell} & \le & |w^{(N-1)}|_{n+\ell}+C_0 |w^{(N-1)}|_\ell \\ 
&\le&
\left(|w^{(N-2)}|_{n+\ell}+C_0 |w^{(N-2)}|_\ell\right)+C_0\left(|w^{(N-2)}|_{\ell}+C_0 |w^{(N-2)}|_{\ell-n}
\right) \\
&=& 
|w^{(N-2)}|_{n+\ell}+2C_0 |w^{(N-2)}|_\ell+C_0^2 |w^{(N-2)}|_{\ell-n}\\
&\le& 
\ldots \\
&\le& 
{j\choose 0} |w^{(N-j)}|_{n+\ell}+ 
\ldots+{j\choose j}C_0^j|w^{(N-j)}|_{n+\ell-jn} \\
&=& 
\sum_{\iota=0}^{j}{j\choose\iota}C_0^\iota|w^{(N-j)}|_{n+\ell-\iota n}.
\end{eqnarray*}
Note that in the latter expression all terms with $n+l-\iota n<n$ are zero, so the sum is
\begin{eqnarray*}
\sum_{\iota=0}^{j}{j\choose\iota}C_0^\iota|w^{(N-j)}|_{n+\ell-\iota n}&=&\sum_{\iota\le\frac{\ell}{n}}{j\choose\iota}C_0^\iota|w^{(N-j)}|_{n+\ell-\iota n} \\
&\le&
\ldots \\
&\le& 
C_0^{c/n}\sum_{\iota\le\frac{\ell}{n}}{N\choose\iota}|w^{(0)}|_{n+\ell-\iota n}.
\end{eqnarray*}
By~\eqref{eq:collection_growth}, we have $|w^{(0)}|_{n+\ell-\iota n}=|w_n|_{n+\ell-\iota n}\le C_n|w|^{n+\ell-\iota n}$. Also recall that $N=|w_n|_{n}\le C_n|w|^n$, so
\begin{eqnarray*}
C_0^{c/n}\sum_{\iota\le\frac{\ell}{n}}{N\choose\iota}|w^{(0)}|_{n+\ell-\iota n}
&\le&
C_0^{c/n}\sum_{\iota\le\frac{\ell}{n}}N^\iota C_n|w|^{n+\ell-\iota n}\\
&\le& 
C_0^{c/n}\sum_{\iota\le\frac{\ell}{n}}(C_n|w|^{n})^\iota C_n|w|^{n+\ell-\iota n} \\ 
&\le&
C_0^{c/n}\sum_{\iota\le\frac{\ell}{n}}C_n^\iota C_n|w|^{\iota n}|w|^{n+\ell-\iota n} \\ 
&\le&
C_{n+1}|w|^{n+\ell}.
\end{eqnarray*}
Now, to establish the statement, it suffices to set the polynomial $P(x)=C_cx^c$.
\end{proof}

\begin{remark}
Note that we actually have shown that $|\alpha_{ij}|\le C_i|w|^i$ for some constant $C_i$ that only depends on the class of nilpotency $c$.
\end{remark}

\begin{proposition}\label{pr:permutation}
For every integer $c>0$, there is a polynomial $P$ such that in any nilpotent group $H$ of nilpotency class at most $c$ for any $k$ elements $f_1,\ldots,f_k\in H$, any $\varepsilon_1,\ldots,\varepsilon_k\in\{0,1\}$, and any permutation $\tau\in S_k$, an equality
\[
f_{\tau_1}^{\varepsilon_{\tau_1}}\cdots f_{\tau_k}^{\varepsilon_{\tau_k}}=f_1^{\varepsilon_1}\cdots f_k^{\varepsilon_k}\cdot h_1^{\alpha_1}\cdots h_{m}^{\alpha_m}
\]
takes place, where $h_1,\ldots,h_m$ are all possible iterated commutators of $f_1,\ldots,f_k$ of weights $2,3,\ldots,c$, and $|\alpha_i|\le P(k)$, $i=1,\ldots,m$.
\end{proposition}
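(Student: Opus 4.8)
The plan is to reduce the statement to the free nilpotent group $N_{k,c}$ of rank $k$ and class $c$, where Theorem~\ref{th:nilp_growth} applies with a polynomial that is \emph{uniform} in the rank. Let $x_1,\ldots,x_k$ be the free generators of $N_{k,c}$. Since $H$ has nilpotency class at most $c$, the assignment $x_i\mapsto f_i$ extends, by the universal property of free nilpotent groups, to a homomorphism $\phi\colon N_{k,c}\to H$. Any identity established in $N_{k,c}$ then maps to the corresponding identity in $H$, so it suffices to prove the claim in $N_{k,c}$ and push it forward along $\phi$.

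First I would form the correction word
\[
w=\bigl(x_1^{\varepsilon_1}\cdots x_k^{\varepsilon_k}\bigr)^{-1}\bigl(x_{\tau_1}^{\varepsilon_{\tau_1}}\cdots x_{\tau_k}^{\varepsilon_{\tau_k}}\bigr)
\]
in $N_{k,c}$. Because each $\varepsilon_i\in\{0,1\}$, each of the two products contributes at most $k$ letters, so $|w|\le 2k$. The two products are rearrangements of the same multiset of generators (those $x_i$ with $\varepsilon_i=1$), hence have equal image in the abelianization $N_{k,c}^{\mathrm{ab}}\cong\MZ^k$, and therefore $w\in\gamma_2(N_{k,c})$.

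Next I would apply Theorem~\ref{th:nilp_growth} to $w$ relative to a Malcev basis of iterated commutators $y_1,\ldots,y_M$ for $N_{k,c}$, where $y_1=x_1,\ldots,y_k=x_k$ have weight $1$ and $y_{k+1},\ldots,y_M$ have weight between $2$ and $c$. This gives $w=y_1^{\alpha_1}\cdots y_M^{\alpha_M}$ with $|\alpha_i|\le P_0(|w|)\le P_0(2k)$, where $P_0$ is the polynomial supplied by the theorem, depending only on $c$. Since $w\in\gamma_2(N_{k,c})$, the weight-$1$ coordinates vanish, $\alpha_1=\cdots=\alpha_k=0$, so $w=y_{k+1}^{\alpha_{k+1}}\cdots y_M^{\alpha_M}$ is a product of iterated commutators of weights $2,\ldots,c$. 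Applying $\phi$, each $\phi(y_{k+j})$ is the iterated commutator of $f_1,\ldots,f_k$ obtained by substituting $f_i$ for $x_i$, hence one of the $h_1,\ldots,h_m$ in the statement; to any $h_i$ not arising this way I assign exponent $0$. Rearranging $\phi(w)$ then yields the asserted equality with $|\alpha_i|\le P_0(2k)$, and setting $P(k)=P_0(2k)$, a polynomial in $k$ depending only on $c$, finishes the argument.

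The single point that genuinely requires care—and the reason Theorem~\ref{th:nilp_growth} was stated uniformly—is that the rank $k$ of the auxiliary free nilpotent group grows with the number of elements $f_i$. A Malcev bound depending on the particular group would produce a polynomial whose degree or coefficients could grow with $k$, which would defeat the purpose; the uniform bound of Theorem~\ref{th:nilp_growth}, valid across all ranks simultaneously, is exactly what keeps $P$ independent of $k$. Everything else is a routine transfer through $\phi$ together with the abelianization observation that places $w$ in $\gamma_2$.
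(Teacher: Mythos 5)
Your proposal is correct and follows essentially the same route as the paper: reduce to the free nilpotent group $N_{k,c}$ via the universal property, invoke the rank-uniform bound of Theorem~\ref{th:nilp_growth}, and specialize $x_i\mapsto f_i$. The only (cosmetic) difference is that you normal-form the correction word $w=(x_1^{\varepsilon_1}\cdots x_k^{\varepsilon_k})^{-1}(x_{\tau_1}^{\varepsilon_{\tau_1}}\cdots x_{\tau_k}^{\varepsilon_{\tau_k}})$ and use $w\in\gamma_2$ to kill the weight-one coordinates, whereas the paper normal-forms the permuted product directly and reads off its weight-one part as $x_1^{\varepsilon_1}\cdots x_k^{\varepsilon_k}$.
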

\begin{proof}
Consider the free nilpotent group $N_{k,c}$ on $k$ generators $x_1,\ldots,x_k$. By Theorem~\ref{th:nilp_growth}, the equality
\[
x_{\tau1}^{\varepsilon_{\tau1}}\cdots x_{\tau k}^{\varepsilon_{\tau k}}=x_1^{\varepsilon_1}\cdots x_k^{\varepsilon_k}\cdot y_{k+1}^{\alpha_{k+1}}\cdots y_{m}^{\alpha_m}
\]
takes place, where $y_{k+1},\ldots,y_m$ are iterated commutators of $x_i$, and $|\alpha_i|\le P(k)$. Since $N_{k,c}$ is a free nilpotent group of class $c$ on free generators $x_1,\ldots, x_k$, the same equality holds under the specialization $x_i\to f_i$.
\end{proof}

\section{Distortion in polycyclic groups}

The following two statements are well known, but we provide the proof here for the sake of completeness. Recall that $\|\cdot\|$ denotes the Euclidean norm. Also note that below we follow the convention that, for $m,l\in\mathbb N\cup\{0\}$,  the binomial coefficient $m\choose l$ is $0$ whenever $m<l$.

\begin{lemma}\label{le:power_of_matrix}
Let $M$ be an $n\times n$
matrix with complex entries, let $\alpha$ be the maximum of absolute values of its eigenvalues. There is a positive constant $C_M$ such that for any $v\in\mathbb C^n$ and any $k\in\mathbb N$:
\[
\|M^kv\|\le C_M\left(\alpha^k+{k\choose 1}\alpha^{k-1}+\ldots+{k\choose n}\alpha^{k-n}\right)\|v\|.
\]
\end{lemma}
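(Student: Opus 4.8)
The plan is to reduce the statement to a bound on the operator norm of $M^k$ via the Jordan canonical form, where the nilpotent parts of the Jordan blocks produce precisely the binomial coefficients appearing on the right-hand side.

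First I would equip $\Mat(n,\mathbb C)$ with the operator ($2$-)norm induced by the Euclidean norm, so that $\|M^kv\|\le\|M^k\|\,\|v\|$ and it suffices to bound $\|M^k\|$ by $C_M\sum_{l=0}^{n}{k\choose l}\alpha^{k-l}$. Writing $M=PJP^{-1}$ with $J$ in Jordan canonical form, we have $M^k=PJ^kP^{-1}$, and submultiplicativity gives $\|M^k\|\le\|P\|\,\|P^{-1}\|\,\|J^k\|$; the constant $\|P\|\,\|P^{-1}\|$ will be absorbed into $C_M$, and the task reduces to estimating $\|J^k\|$.

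Since $J$ is block diagonal and the operator norm of a block-diagonal matrix is the maximum of the norms of its blocks, I would treat a single Jordan block $\lambda I+N$ of size $s\le n$, where $N$ is the nilpotent shift with $N^s=0$. As $\lambda I$ and $N$ commute, the binomial theorem gives
\[
(\lambda I+N)^k=\sum_{l=0}^{s-1}{k\choose l}\lambda^{k-l}N^l,
\]
the sum truncating at $l=s-1$ by nilpotency and at $l=k$ by the convention ${k\choose l}=0$ for $l>k$. Taking norms and using $\|N^l\|\le\|N\|^l$ (a constant, since $l\le n$), $|\lambda|\le\alpha$, and $|\lambda|^{k-l}\le\alpha^{k-l}$ for $k\ge l$, I obtain $\|(\lambda I+N)^k\|\le C'\sum_{l=0}^{n}{k\choose l}\alpha^{k-l}$ for a constant $C'$ depending only on $M$, where extending the sum up to $l=n$ only adds nonnegative terms. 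Taking the maximum over blocks and multiplying by $\|P\|\,\|P^{-1}\|$ yields the claim with $C_M=\|P\|\,\|P^{-1}\|\,C'$.

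The main point requiring care—rather than a genuine obstacle—is the bookkeeping of edge cases, to ensure $C_M$ depends on $M$ alone and not on $k$ or $v$. Specifically, the convention ${k\choose l}=0$ for $l>k$ is what makes the estimate valid for small $k$, and the inequality $|\lambda|^{k-l}\le\alpha^{k-l}$ must be checked to remain correct when $\alpha<1$ or $\alpha=0$ (the latter forcing $M$ nilpotent, in which case both sides vanish for $k>n$). No step relies on $\alpha$ being a simple or dominant eigenvalue, so the argument is uniform across all Jordan structures.
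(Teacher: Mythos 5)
Your proposal is correct and follows essentially the same route as the paper: conjugate to Jordan normal form, absorb the change-of-basis operator norms into the constant, and expand each Jordan block $(\lambda I+N)^k$ by the binomial theorem to produce the terms ${k\choose l}\alpha^{k-l}$. The paper compresses the block computation into the phrase ``inspecting Jordan normal form, one can see,'' so your write-up simply makes explicit what the paper leaves to the reader, including the edge-case conventions.
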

\begin{proof}
Let $M=C^{-1}J_MC$, where $J_M$ is the Jordan normal form of $M$ and $C\in\mathrm{GL}_n(\mathbb C)$. Let $b,c>0$ be such that $\|Cv\|\le b\|v\|$ and $\|C^{-1}v\|\le c\|v\|$ for all $v\in\mathbb C^n$. Then, inspecting Jordan normal form, one can see that
\begin{eqnarray*}
\|M^kv\|=\|C^{-1}J_M^kCv\|&\le& c\|J_M^kCv\| \\
&\le& c \left(\alpha^k+{k\choose 1}\alpha^{k-1}+\ldots+{k\choose n}\alpha^{k-n}\right)\|Cv\| \\
&\le& bc \left(\alpha^k+{k\choose 1}\alpha^{k-1}+\ldots+{k\choose n}\alpha^{k-n}\right)\|v\|,
\end{eqnarray*}
as required.
\end{proof}

\begin{proposition}\label{pr:big_eigenvalue}
Let $H=\langle x\rangle\ltimes K$, where $K=\mathbb Z^n$ and $x$ acts on $K$ by conjugation via a matrix $X\in\SL_n(\mathbb Z)$. Suppose $H$ is not virtually nilpotent. Then $X$ has a complex eigenvalue of absolute value greater than $1$.
\end{proposition}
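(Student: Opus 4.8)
The plan is to prove the contrapositive: assuming that every eigenvalue of $X$ has absolute value at most $1$, I will show that $H$ is virtually nilpotent. First I would exploit the constraint $X\in\SL_n(\mathbb Z)$. Since $\det X=1$ is the product of the eigenvalues (with multiplicity), the product of their absolute values equals $1$; combined with the assumption that each $|\lambda_i|\le 1$, this forces every eigenvalue to lie on the unit circle, so $|\lambda_i|=1$ for all $i$.

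The key arithmetic step comes next. Because $X$ has integer entries, its characteristic polynomial is monic with integer coefficients, so every eigenvalue is an algebraic integer. Moreover, the minimal polynomial over $\mathbb Q$ of any eigenvalue $\lambda$ divides the characteristic polynomial, so all Galois conjugates of $\lambda$ are again roots of the characteristic polynomial, i.e.\ eigenvalues of $X$, and hence also have absolute value $1$. Thus each eigenvalue is an algebraic integer all of whose conjugates lie on the unit circle, and by Kronecker's theorem every such algebraic integer is a root of unity.

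I expect this arithmetic step to be the main point of the argument; in particular, the crucial subtlety is the observation that one must control \emph{all} Galois conjugates of each eigenvalue, not merely the eigenvalues regarded as complex numbers, before Kronecker's theorem can be invoked. The determinant computation and the passage through conjugates are exactly what close this gap; everything surrounding it is routine.

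Finally I would draw the group-theoretic conclusion. Since every eigenvalue of $X$ is a root of unity, a suitable power $X^d$, with $d$ a common multiple of the orders of these roots of unity, has all eigenvalues equal to $1$, so $X^d=I+N$ with $N$ nilpotent. The subgroup $\langle x^d\rangle\ltimes K$ has index $d$ in $H$, and within it the generator $x^d$ acts on $K=\mathbb Z^n$ unipotently: iterating the commutator of $x^d$ with an element of $K$ successively applies powers of $N$, which vanish after at most $n$ steps, so the lower central series terminates. Hence $\langle x^d\rangle\ltimes K$ is nilpotent and $H$ is virtually nilpotent, which contradicts the hypothesis and completes the contrapositive.
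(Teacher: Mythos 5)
Your argument is correct, but it follows a genuinely different route from the paper. You argue arithmetically: $\det X=1$ forces every eigenvalue onto the unit circle, the Galois conjugates of each eigenvalue are again eigenvalues (since the minimal polynomial divides the integer characteristic polynomial), Kronecker's theorem then makes every eigenvalue a root of unity, and passing to the finite-index subgroup $\langle x^d\rangle\ltimes K$ on which the action is unipotent gives an explicit nilpotent subgroup by a direct lower-central-series computation. The paper instead argues geometrically: using Lemma~\ref{le:power_of_matrix}, the assumption $|\lambda_i|\le 1$ gives a polynomial bound $\|X^k g\|\le p(k)\|g\|$, hence $K$ is at most polynomially distorted in $H$, hence $H$ has polynomial growth, and Gromov's polynomial growth theorem yields virtual nilpotence. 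Your approach is more self-contained and constructive --- it avoids invoking Gromov's deep theorem and exhibits the finite-index nilpotent subgroup explicitly, with Kronecker's classical lemma as the only external input --- and you correctly identify the one real subtlety, namely that Kronecker requires control of \emph{all} conjugates, which the divisibility of the minimal polynomial by the characteristic polynomial supplies. What the paper's route buys is economy within its own framework: Lemma~\ref{le:power_of_matrix} is needed elsewhere anyway, and the distortion/growth viewpoint is exactly the theme the paper is developing. Either proof is acceptable.
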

\begin{proof}

Suppose all eigenvalues of $X$ are of absolute value at most~$1$.

Let $e_1,\ldots,e_n$ be the standard basis for $K=\mathbb Z^n$ and suppose 
a word $w=w(x,e_1,\ldots,e_n)$ represents an element $g$ of $K$. 
Then algebraic sum of the exponents of $x$ in $w$ is $0$ and, hence, $w$ can be written as:
\begin{equation}\label{eq:conj_form}
w=g_1^{x^{k_1}}\cdots g_\ell^{x^{k_\ell}},
\end{equation}
where $\ell\le |w|$, $|k_i|\le |w|$, $g_i\in K$ for each $i=1,\ldots,\ell$, and  $|g_1|+\ldots+|g_\ell|\le |w|$. 
Recall that for an element $g\in K=\mathbb Z^n$, one has 
$$
\|g\|\le |g|_K\le \sqrt n\|g\|,
$$
where $|g|_K$ stands for the word length of $g$ in standard generators of $K$. 
Since $\alpha\le 1$, it follows from Lemma~\ref{le:power_of_matrix} that there is a polynomial $p(k)$ 
of degree $n$ such that $\|X^kg\|\le p(k)\|g\|$ for every $g\in K$. We therefore obtain:
\[
|g_i^{x^{k_i}}|_K\le \sqrt n\|g_i^{x^{k_i}}\|=\sqrt n\|X^{k_i}g_i\|\le\sqrt n\, p(k_i)\|g_i\|\le \sqrt n\,p(k_i)|g_i|_K,
\]
and, hence:
\begin{align*}
|w|_K=|g_1^{x^{k_1}}\cdots g_\ell^{x^{k_\ell}}|_K  &\le \sum_{i=1}^\ell |g_i^{x^{k_i}}|_K
\le \sum_{i=1}^\ell \sqrt{n} p(k_i) |g_i|_K\\
&\le \sqrt{n} p(|w|)\sum_{i=1}^\ell|g_i|_K \le \sqrt n\, p(|w|)|w|.
\end{align*}
Therefore, the distortion of $K$ in $H$ is at most polynomial, say $|w|_K\le q(|w|)$.

Finally we claim that $H$ has polynomial growth. 
Indeed, if $u\in H$ is an element of the ball $B_N$ of radius $N$ 
in the group $H$ in generators $x,e_1,\ldots,e_n$, then
$u=x^kw$, where $w$ is of the form~\eqref{eq:conj_form} and $|k|\le N$. 
Therefore:
\[
|B_N|\le (2N+1)\cdot (2q(N)+1)^n,
\]
and $H$ has polynomial growth.
Thus $H$ is virtually nilpotent by~\cite{Gromov_pgrowth:1981}.
\end{proof}


Recall that every polycyclic group $G$ has a unique maximal normal nilpotent subgroup, called the Fitting subgroup of $G$ and denoted $\mathop{\mathrm{Fitt}}G$ (see, for example, \cite[Chapter~1]{segal2005polycyclic}). Note that $\mathop{\mathrm{Fitt}}G$ is a characteristic subgroup of $G$. Let
\begin{equation}\label{eq:poly_series}
1=G_0\lhd G_1\lhd G_2 \lhd \ldots\lhd G_m=G
\end{equation}
be a subnormal series for $G$ with cyclic quotients. Denote $\mathop{\mathrm{Fitt}}G_i=H_i$. For each $i=0,\ldots, m-1$, we have that $G_i\lhd G_{i+1}$ and $H_i$ is a characteristic subgroup of $G_{i}$, therefore, $H_i\lhd G_{i+1}$. If follows that $H_i\le H_{i+1}$. Suppose that $H=\mathop{\mathrm{Fitt}}G$ is a term of the polycyclic series~\eqref{eq:poly_series}, $H=G_j$ with $j<m$. Then $H=H_j\le H_{j+1}\le H_m=H$, so $H_j=H_{j+1}$. Observe that  in this case, $G_{j+1}$ is not virtually nilpotent if $G_{j+1}/G_j$ is infinite cyclic. Indeed, if $G_{j+1}$ is virtually nilpotent, then it has a finite index normal nilpotent subgroup and therefore $H_{j+1}>H_j$.

\begin{proposition}\label{pr:abelian-by-cyclic-inside}
Let $G$ be a polycyclic group that is not vitually nilpotent. 
There exists an element $x\in G$ and normal nilpotent subgroups $K\le H$ of $G$
such that $H/K$ is infinite abelian and $\langle x,H\rangle/K$ is not virtually nilpotent.
\end{proposition}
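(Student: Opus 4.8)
The plan is to locate the required configuration inside the Fitting subgroup of $G$. First I would set $H=\mathop{\mathrm{Fitt}}G$, the (characteristic, normal, nilpotent) Fitting subgroup. Since $H$ is nilpotent, $H\neq G$, for otherwise $G$ would be nilpotent; moreover $G/H$ is infinite, since otherwise $H$ would be a finite-index normal nilpotent subgroup and $G$ virtually nilpotent. As a quotient of a polycyclic group, $G/H$ is infinite polycyclic, hence contains an element $\bar x$ of infinite order; lift it to $x\in G$. I would then refine a subnormal series of $G$ with cyclic quotients so that $H=G_j$ and $G_{j+1}=\langle x,H\rangle$, with $G_{j+1}/G_j=\langle\bar x\rangle$ infinite cyclic. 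The observation recorded just before the proposition now applies verbatim: since $\mathop{\mathrm{Fitt}}G_j=\mathop{\mathrm{Fitt}}G_{j+1}=H$ while $[G_{j+1}:G_j]=\infty$, the group $\langle x,H\rangle$ cannot be virtually nilpotent.

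Next I would fix the candidate for $K$. Every abelian quotient of $H$ factors through $H^{\mathrm{ab}}=H/[H,H]$, so I take $K$ to be the preimage in $H$ of the torsion subgroup of $H^{\mathrm{ab}}$. This $K$ is characteristic in $H$ and hence, as $H\lhd G$, normal in $G$; it is nilpotent as a subgroup of $H$; and $H/K$ is the free part of $H^{\mathrm{ab}}$, an infinite free abelian group $\mathbb{Z}^n$ with $n\ge 1$ (a finitely generated infinite nilpotent group has infinite abelianization). Conjugation by $x$ induces $A\in\mathrm{GL}_n(\mathbb{Z})$ on $H/K=\mathbb{Z}^n$, with $\det A=\pm1$. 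Since $K\lhd G$ and no nonzero power of $x$ lies in $H$, we have $\langle x,H\rangle/K=\langle xK\rangle\ltimes(H/K)\cong\mathbb{Z}^n\rtimes_A\mathbb{Z}$.

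The heart of the argument is to show that $A$ has an eigenvalue of absolute value greater than $1$; granting this, $\langle x,H\rangle/K\cong\mathbb{Z}^n\rtimes_A\mathbb{Z}$ distorts $H/K$ exponentially (some conjugate $x^k e_\nu x^{-k}$ has word length $O(k)$ but norm of order $\alpha^k$ with $\alpha>1$), so it has exponential growth and is not virtually nilpotent by \cite{Gromov_pgrowth:1981}, finishing the proof. To produce the eigenvalue I argue by contrapositive: suppose all eigenvalues of $A$ have absolute value at most $1$ (hence exactly $1$, as $\det A=\pm1$). Passing to the associated graded Lie algebra $\bigoplus_i(\gamma_i(H)/\gamma_{i+1}(H))\otimes\mathbb{C}$, which is generated in degree $1$ by $H^{\mathrm{ab}}\otimes\mathbb{C}$ and on which $x$ acts by a graded Lie algebra automorphism, every eigenvalue of $x$ on $\gamma_i(H)/\gamma_{i+1}(H)\otimes\mathbb{C}$ is a product of $i$ eigenvalues of $A$, hence also of absolute value $1$. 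I would then run the growth estimate of Proposition~\ref{pr:big_eigenvalue} level by level: using Lemma~\ref{le:power_of_matrix} on each quotient $\gamma_i(H)/\gamma_{i+1}(H)$ (all of spectral radius $1$), together with the Malcev-coordinate length bounds of Theorem~\ref{th:nilp_growth} and the reordering of Proposition~\ref{pr:permutation}, one bounds the Malcev coordinates of any product of $N$ conjugates $g^{x^k}$ with $|g|,|k|\le N$ polynomially in $N$. This shows $H$ is at most polynomially distorted in $\langle x,H\rangle$ and that $\langle x,H\rangle$ has polynomial growth, hence is virtually nilpotent by \cite{Gromov_pgrowth:1981} — contradicting the first paragraph.

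The main obstacle is precisely this last growth estimate. Proposition~\ref{pr:big_eigenvalue} treats only the case where the nilpotent subgroup is abelian ($K=\mathbb{Z}^n$), and extending the polynomial-distortion bound to a genuinely nilpotent $H$ requires controlling how Malcev coordinates accumulate when many conjugates are multiplied and collected. The associated-graded computation is the conceptual device that makes this tractable, since it reduces the spectral hypothesis at every level of the lower central series to the single matrix $A$ on the abelianization; the remaining work is to package the level-by-level application of Lemma~\ref{le:power_of_matrix} into a uniform polynomial bound, for which Theorem~\ref{th:nilp_growth} and Proposition~\ref{pr:permutation} are exactly the tools at hand. Once $A$ is shown to have an eigenvalue off the unit circle, the subgroups $K\le H$ and the element $x$ constructed above satisfy all requirements: $H/K$ is infinite abelian and $\langle x,H\rangle/K$ is not virtually nilpotent.
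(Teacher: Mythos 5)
Your construction of $x$, $H$, $K$ matches the paper's in substance ($H=\mathop{\mathrm{Fitt}}G$, $x$ a lift of a generator of an infinite cyclic quotient just above $H$, and $K$ a characteristic subgroup of $H$ with $H/K$ infinite abelian; the paper takes $K=[H,H]$ and defers the reduction to a \emph{free} abelian quotient to Section~\ref{sec:general_case}, whereas you take the preimage of the torsion of $H^{\mathrm{ab}}$ right away --- a harmless variant). Two remarks on this part. First, you choose $x$ before the subnormal series and then assert the series can be refined so that $G_{j+1}=\langle x,H\rangle$; this requires $\langle x,H\rangle$ to be subnormal in $G$, which is not automatic for an arbitrary lift. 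The paper avoids this by invoking \cite[Chapter~1, Proposition~2]{segal2005polycyclic} to choose the series first and only then setting $x=g_{j+1}$; your argument that $\mathop{\mathrm{Fitt}}G_{j+1}=\mathop{\mathrm{Fitt}}G_j$ (and hence that $\langle x,H\rangle$ is not virtually nilpotent) needs $G_{j+1}$ to sit inside a full chain up to $G$. This is fixable by reordering your choices.

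The genuine gap is in the last step, showing that $\langle x,H\rangle/K$ is not virtually nilpotent. The paper disposes of this in one line with a Hall-type criterion: since $K=H'$ with $H$ normal nilpotent, if $\langle x,H\rangle/H'$ were virtually nilpotent then so would be $\langle x,H\rangle$ \cite[Chapter~1, Corollary~12]{segal2005polycyclic}, a contradiction. You instead try to prove the implication ``all eigenvalues of the induced matrix $A$ on $H/K$ have modulus $1$ $\Longrightarrow$ $\langle x,H\rangle$ has polynomial growth,'' via the associated graded Lie algebra and a level-by-level version of the estimate in Proposition~\ref{pr:big_eigenvalue}. That proposition only treats an abelian kernel $\mathbb{Z}^n$; extending the polynomial-distortion bound to a genuinely nilpotent $H$ (controlling how Malcev coordinates accumulate when $N$ conjugates $g^{x^k}$ are multiplied and collected) is essentially the content of Osin's distortion theorem, which the authors explicitly say they were inspired by but deliberately avoid relying on. You acknowledge this step as ``the remaining work,'' but it is the entire difficulty of your route, and neither Theorem~\ref{th:nilp_growth} nor Proposition~\ref{pr:permutation} as stated delivers it (they bound coordinates in terms of word length in $H$, not in terms of the twisted lengths coming from powers of $A$ acting on each graded piece). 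As written, the proof is therefore incomplete; the missing idea is the soft reduction $K=[H,H]$ plus \cite[Chapter~1, Corollary~12]{segal2005polycyclic}, which replaces the entire spectral and growth analysis.
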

\begin{proof}
Since $G/\mathop{\mathrm{Fitt}}G$ is infinite, it has a polycyclic series~\eqref{eq:poly_series} such that $\mathop{\mathrm{Fitt}}G=G_j$, $j<m$, and $G_{j+1}/G_j$ is infinite cyclic (see, for example,~\cite[Chapter~1, Proposition~2]{segal2005polycyclic}). Let $g_{j+1}\in G_{j+1}$ be such that $G_{j+1}=\langle g_{j+1},G_j\rangle$. We claim that we can take $x=g_{j+1}$, $H=G_j$, and $K$ to be the commutator subgroup $H'=[H,H]$ of $H$.

Indeed, the subgroup $K=H'$ is characteristic in $H=G_{j+1}=\mathop{\mathrm{Fitt}}G$, therefore normal in $G$. Further, if the abelianization $H/H'$ is finite, then $H$ is finite (for example, by~\cite[Chapter~1, Corollary~9]{segal2005polycyclic}) and therefore so is $G$, by~\cite[Chapter~1, Lemma~6]{segal2005polycyclic}. 
It was observed above that $G_{j+1}=\langle x, H\rangle$ is not virtually nilpotent. Finally, if $\langle x, H\rangle/H'$ is virtually nilpotent, it follows by~\cite[Chapter~1, Corollary~12]{segal2005polycyclic}) that $\langle x,H\rangle$ is virtually nilpotent, which is not the case.
\end{proof}


For a polycyclic group $G$ let $x\in G$ and $K\le H\le G$ be as provided by
Proposition~\ref{pr:abelian-by-cyclic-inside}. 
In Section~\ref{sec:abelian_by_cyclic}, we show that $\SSP$ in the abelian-by-cyclic group $\langle x,H\rangle/K$ is $\NP$-hard. In Section~\ref{sec:general_case}, we show that the instances involved in this reduction, in turn, polynomially reduce to $\SSP(\langle x,H\rangle)$ and therefore to $\SSP(G)$. Together with the observation that the word problem in $G$ is solvable in polynomial time, this will imply that $\SSP(G)$ is $\NP$-complete.



\section{$\SSP$ in abelian-by-cyclic groups}\label{sec:abelian_by_cyclic}

Fix a group $F = \MZ\ltimes \MZ^n$ with exponentially distorted $\MZ^n$ by a matrix $X\in\SL_n(\MZ)$.
Also, fix a generating set $\{x,e_1,\ldots,e_n\}$, where
$x$ is the generator of $\MZ$ and $e_1,\ldots,e_n$ are standard generators
for $\MZ^n$. Let $\varphi:F(x,e_1,\ldots,e_n)\to F$ be the canonical
epimorphism.
Below we reduce a problem known to be $\NP$-complete, namely zero-one equation problem, to $\SSP(F)$.


As before, let $\alpha$ be the greatest absolute value of
an eigenvalue for $X\in\SL_n(\MZ)$. Define a polynomial $p(k)\in\mathbb R[k]$:
\[
p(k) =C_X\cdot \left(1+{k\choose 1}\frac{1}{\alpha}+\ldots+{k\choose n}\frac{1}{\alpha^n}\right),
\]
where $C_X$ is a constant provided by Lemma~\ref{le:power_of_matrix}.

\begin{proposition}\label{pr:norm_growth} 
In the above notation, for every $k\in\mathbb N$, there is $j\in\{1,\ldots, n\}$ 
satisfying:
\[
\tfrac{1}{\sqrt n} \alpha^k\le \|X^k e_j\|\le p(k)\alpha^k.
\]
\end{proposition}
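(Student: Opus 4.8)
The plan is to treat the two inequalities separately: the upper bound will hold for \emph{every} index $j$ and follows immediately from Lemma~\ref{le:power_of_matrix}, while the lower bound is the substantive part and will be established for \emph{at least one} $j$ by means of an eigenvector argument.

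For the upper bound I would apply Lemma~\ref{le:power_of_matrix} to the vector $v=e_j$, which has $\|e_j\|=1$. This gives $\|X^ke_j\|\le C_X\left(\alpha^k+\binom{k}{1}\alpha^{k-1}+\ldots+\binom{k}{n}\alpha^{k-n}\right)$. Since $X\in\SL_n(\MZ)$ has nonzero determinant, all eigenvalues are nonzero and so $\alpha>0$ (indeed $\alpha>1$ by Proposition~\ref{pr:big_eigenvalue}); hence I may factor $\alpha^k$ out of the right-hand side. The resulting bracket is exactly $C_X\left(1+\binom{k}{1}\frac1\alpha+\ldots+\binom{k}{n}\frac1{\alpha^n}\right)=p(k)$, so the bound reads $\|X^ke_j\|\le p(k)\alpha^k$, valid for each $j$ (with the convention $\binom{k}{i}=0$ for $k<i$ handling small $k$).

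For the lower bound, let $\lambda$ be an eigenvalue of $X$ with $|\lambda|=\alpha$ and let $v\in\mathbb C^n$ be a corresponding eigenvector normalized so that $\|v\|=1$. Then $X^kv=\lambda^kv$, whence $\|X^kv\|=\alpha^k$. Writing $v=\sum_{j=1}^n c_je_j$ with $\sum_j|c_j|^2=1$ and applying the triangle inequality followed by Cauchy--Schwarz, I obtain $\alpha^k=\|X^kv\|\le\sum_j|c_j|\,\|X^ke_j\|\le\big(\max_j\|X^ke_j\|\big)\sum_j|c_j|\le\sqrt n\,\max_j\|X^ke_j\|$. Rearranging yields $\max_j\|X^ke_j\|\ge\tfrac1{\sqrt n}\alpha^k$, so some index $j$ realizes the lower bound; this same $j$ already satisfies the upper bound from the previous step, which completes the argument.

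The main point to watch is that the lower bound must pin the growth rate down to a single standard basis vector $e_j$, whereas the eigenvector $v$ witnessing the rate $\alpha$ may be spread across all coordinates and is in general complex. The Cauchy--Schwarz step is precisely what concentrates the estimate onto one coordinate, and the factor $\sqrt n$ it introduces is exactly the loss recorded in the statement. The complex nature of $v$ causes no difficulty, since the vectors $X^ke_j$ are real and the triangle inequality in $\mathbb C^n$ applies verbatim to $\|\sum_j c_jX^ke_j\|$.
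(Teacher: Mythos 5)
Your proposal is correct and follows essentially the same route as the paper: the upper bound is read off from Lemma~\ref{le:power_of_matrix} applied to $e_j$, and the lower bound comes from expanding an eigenvector for an eigenvalue of modulus $\alpha$ in the standard basis and using the triangle inequality together with Cauchy--Schwarz ($\sum_j|c_j|\le\sqrt n\,\|v\|$), which is exactly the paper's argument stated contrapositively. The only cosmetic difference is that the paper phrases the lower bound as ``if all $\|X^ke_i\|<\mu$ then $\alpha^k<\mu\sqrt n$,'' while you argue directly for the maximizing index $j$.
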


\begin{proof} 
The latter inequality is established in Lemma~\ref{le:power_of_matrix}. 
To show the former inequality, observe that if for some $\mu>0$:
\[
\|X^k e_i\|< \mu\quad \mbox{for each } i=1,\ldots,n,
\]
then for any $v=v_1e_1+\ldots+v_ne_n\in\mathbb C^n$ we have
\[
\|X^kv\|\le \sum_{i=1}^n|v_i|\|X^ke_i\|< \mu\sum_{i=1}^n|v_i|\le \mu \sqrt n\|v\|.
\]
Since there is $0\neq v\in\mathbb C^n$ such that $\|X^kv\|=\alpha^k\|v\|$, it follows that $\alpha^k< \mu\sqrt n$, that is $\mu>\tfrac{1}{\sqrt n}\alpha^k$, as required.
\end{proof}

By Proposition~\ref{pr:big_eigenvalue}, $\alpha>1$.
Observe that given $k\in\mathbb N$, one can find 
a basis vector $e$ (denoted by $e_k^\ast$)
provided by Proposition~\ref{pr:norm_growth} 
in polynomial time by computing $X^ke_1,\ldots,X^ke_n$. 
Now, for $\lambda\ge 1$ and $k\in\MN$ define a constant:
\[
c_{\lambda,k}=\lceil\log_\alpha(p(k))\rceil
+\lceil\log_\alpha \lambda \rceil+ \lceil \log_\alpha\sqrt n\rceil+1
\]
and notice that:
\[
\lambda\|X^{k}e_{k}^\ast\| <
\|X^{k+c_{\lambda,k}} e_{k+c_{\lambda,k}}^\ast\|.
\]
For a sequence of numbers
$n_1=1$, $n_{i+1}=n_i+c_{\lambda,n_i}$ we have:
\begin{equation}\label{eq:growth}
\lambda^{k-1}\|X^{n_1}e_{n_1}^\ast\|<
\lambda^{k-2}\|X^{n_2}e_{n_2}^\ast\|<
\ldots<
\lambda\|X^{n_{k-1}}e_{n_{k-1}}^\ast\|<
\|X^{n_k}e_{n_k}^\ast\|.
\end{equation}
Denote the words corresponding to 
$X^{n_{1}}e_{n_{1}}^\ast, \ldots,X^{n_{k}}e_{n_{k}}^\ast$
by $w_{\lambda,1},\ldots,w_{\lambda,k}$, i.e., define:
\[
w_{\lambda,i} = x^{-n_i} e_{n_i}^\ast x^{n_i}.
\]
Clearly, $|w_{\lambda,i}| \le 1+2n_i$. Now we find an upper bound for $n_i$. Notice that $c_{\lambda,k}\le 
A+B\log_\alpha(\lambda k)$,
where the constants $A,B>0$ depend only on $\alpha$, $C_X$, and $n$. 
Then
\[
n_i \le  n_1+iA+B(\log_\alpha(\lambda n_1)+\cdots+\log_\alpha(\lambda n_i))\le iA+iB\log_\alpha(\lambda n_i),
\]
or
\[
\frac{n_i}{A+B\log_\alpha(\lambda n_i)}\le i,\ \mbox{ that is, }\ \frac{\lambda n_i}{A+B\log_\alpha(\lambda n_i)}\le \lambda i.
\]
Since there is a constant $C\ge 0$ (that depends only on $A$, $B$, $\alpha$) such that $\frac{t}{A+B\log_\alpha(t)}\ge \sqrt{t}-C$ for all $t\ge 1$, we have
\[
\sqrt{\lambda n_i}-C\le \lambda i, \mbox{ so } n_i\le \lambda^{-1}(\lambda i+C)^2.
\]
Therefore,
\begin{equation}\label{eq:ckl}
|w_{\lambda,i}|\le 1+2n_i\le 1+2\lambda^{-1}(\lambda i+C)^2,
\end{equation}
where $C$ ultimately depends only on $X$ and $n$. (Of course, better estimates for the growth of $n_i$ are possible but immaterial for our purposes.)


\begin{proposition}\label{prop:abelian_np_hard}
$\SSP(F)$ is $\NP$-hard.
\end{proposition}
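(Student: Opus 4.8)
The plan is to reduce the $\NP$-complete zero-one equation problem $\ZOE$ to $\SSP(F)$. Given a zero-one matrix $A\in\Mat(n',\MZ)$ with rows $a^{(1)},\ldots,a^{(n')}$, I want to encode the question ``does there exist a zero-one vector $\ovx$ with $A\ovx=1^{n'}$?'' as a subset sum instance in $F=\MZ\ltimes\MZ^n$. The idea is to use the exponentially growing conjugates $w_{\lambda,i}=x^{-n_i}e_{n_i}^\ast x^{n_i}$ built above as ``digits'' whose magnitudes, thanks to inequality~\eqref{eq:growth}, are spread far enough apart that a signed subset sum of them uniquely recovers which subset was chosen. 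In effect these elements will play the role of a weighted positional number system inside the distorted subgroup $\MZ^n$, so that solving the subset sum equation forces a specific choice of $\varepsilon_i\in\{0,1\}$, and that choice can be read off as the bits of a candidate solution vector $\ovx$.

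Concretely, I would proceed as follows. First, fix $\lambda$ large enough (depending only on $n'$, so that it is bounded by a polynomial in the input) that the geometric separation in~\eqref{eq:growth} dominates the number of terms being summed; this guarantees that no two distinct zero-one combinations of the $w_{\lambda,i}$ can coincide, i.e.\ the map from subsets to their sums is injective. Second, for each constraint (each row of $A$ and the target entries of $1^{n'}$) I build one group element by placing the appropriate $w_{\lambda,i}$ blocks in separate ``coordinate bands'' of $\MZ^n$—using disjoint collections of basis vectors $e_j$ for different constraints—so that the $n'$ linear equations $a^{(r)}\cdot\ovx=1$ are simultaneously encoded into a single equation in $F$. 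The target element $g$ encodes the right-hand side $1^{n'}$, and the generators $g_1,\ldots,g_k$ encode the columns of $A$ scaled into the correct bands. Third, I verify both directions of the reduction: a zero-one solution $\ovx$ of $A\ovx=1^{n'}$ yields a choice of $\varepsilon_i$ solving the $\SSP$ instance, and conversely any solution of the $\SSP$ instance must, by the separation/injectivity argument, correspond to a genuine zero-one solution of $A\ovx=1^{n'}$.

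Finally, I must check that the reduction runs in polynomial time. The key quantitative input is the bound~\eqref{eq:ckl}, namely $|w_{\lambda,i}|\le 1+2\lambda^{-1}(\lambda i+C)^2$, which is polynomial in $i$ and $\lambda$; since we use at most a polynomial number of such blocks with $\lambda$ polynomially bounded, every group element produced has polynomial word length and can be written down efficiently (the basis vector $e_{n_i}^\ast$ being computable in polynomial time, as noted above). Together with the fact that $\SSP(F)\in\NP$—one guesses the exponents $\varepsilon_i$ and checks the word equation using the polynomial-time word problem in $F$—this establishes that $\SSP(F)$ is $\NP$-hard.

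I expect the main obstacle to be the injectivity/separation step: one must argue carefully that the exponential spacing guaranteed by~\eqref{eq:growth} really does prevent cancellations or unintended coincidences when many conjugates are summed across several coordinate bands simultaneously. The noncommutativity of $F$ introduces correction terms when reordering the $w_{\lambda,i}$, so I will need to control how the $\MZ$-component and the cross-terms in $\MZ^n$ interact, ensuring the encoding of distinct constraints stays in genuinely independent coordinates and that the dominant-term argument from~\eqref{eq:growth} survives this bookkeeping.
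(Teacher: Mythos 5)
Your overall strategy---reducing $\ZOE$ to $\SSP(F)$ using the exponentially separated conjugates $w_{\lambda,i}$, with the generators encoding the columns of $A$ and the target encoding the all-ones right-hand side---is the paper's strategy. But the step where you enforce the $n'$ row constraints would fail as described. You propose to keep the constraints independent by placing them in ``separate coordinate bands,'' i.e.\ disjoint collections of basis vectors $e_j$ of $\MZ^n$. The group $F=\MZ\ltimes\MZ^n$ is \emph{fixed}, so $n$ is a constant, while the number of rows of the $\ZOE$ instance is unbounded; there are simply not enough coordinates to give each constraint its own band. The paper separates the constraints not by coordinate but by \emph{norm scale}: row $j$ is assigned the single element $w_{\lambda,j}=x^{-n_j}e^\ast_{n_j}x^{n_j}$, and the depths $n_1<\cdots<n_k$ are chosen (via the constants $c_{\lambda,n_i}$ with $\lambda=k$) so that consecutive norms $\|X^{n_j}e^\ast_{n_j}\|$ grow by a factor exceeding $k$, as in~\eqref{eq:growth}. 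Then $g_i=w_{\lambda,1}^{a_{1i}}\cdots w_{\lambda,k}^{a_{ki}}$ and $g=w_{\lambda,1}\cdots w_{\lambda,k}$, and the subset-sum equation becomes a vector equation whose $j$-th ``digit'' has coefficient $-1+\sum_i a_{ji}\varepsilon_i\in[-1,k-1]$; the factor-$k$ separation forces every digit to vanish, which is exactly $A\ovx=1^{n'}$. Your ``positional number system'' intuition is the right one, but it must be applied to the rows (constraints), not to recovering the bits $\varepsilon_i$, and the radix must beat the maximal coefficient $k-1$, which is why $\lambda=k$ suffices.

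Your final worry about noncommutativity and correction terms is moot here and signals that the construction was not carried to completion: every $w_{\lambda,i}$ is a conjugate of a basis vector and hence lies in the abelian normal subgroup $\MZ^n$ (cf.\ Remark~\ref{rem:they_are_in_H}), so all the $w_{\lambda,i}$, all the $g_i$, and $g$ commute, the $x$-components cancel inside each conjugate, and no reordering corrections arise. The polynomial-time bound you cite via~\eqref{eq:ckl} and the $\NP$ membership via the word problem are fine.
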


\begin{proof}
For an instance of $\ZOE$:
\[
A=
\left[
\begin{array}{ccc}
a_{11} & \ldots & a_{1k} \\
\vdots & &\vdots \\
a_{k1} & \ldots & a_{kk} \\
\end{array}
\right]
\]
choose $\lambda=k$ and consider the instance $(g_1,\ldots,g_k,g)$ of $\SSP(F)$, where:
\[
g_i = w_{\lambda,1}^{a_{1i}}\ldots w_{\lambda,k}^{a_{ki}}
\ \mbox{ and }\ 
g = w_{\lambda,1} \ldots w_{\lambda,k}.
\]
We claim that the instance of $\ZOE$ is positive if and only if
the corresponding instance of $\SSP$ is positive. Indeed, let $k\ge 2$ (the case $k=1$ is immediate). The instance of $\SSP$ is positive if and only if the linear combination
\begin{equation}\label{eq:linear_comb}
\left(-1+\sum_{i=1}^k a_{1i}\varepsilon_i\right)X^{n_1}e^\ast_{n_1}+\ldots+ \left(-1+\sum_{i=1}^k a_{ki}\varepsilon_i\right)X^{n_k}e^\ast_{n_k}
\end{equation}
is equal to $0$ for some $\varepsilon_i\in\{0,1\}$. Since for every coefficient in~\eqref{eq:linear_comb} we have
\[
-1\le -1+\sum_{i=1}^k a_{ji}\varepsilon_i\le k-1,
\]
it follows from~\eqref{eq:growth} that~\eqref{eq:linear_comb} is trivial if and only if all coefficients are $0$, i.e., there are $\varepsilon_i\in\{0,1\}$ such that
\[
\sum_{i=1}^k a_{ji}\varepsilon_i=1\ \mbox{ for every }\ 1\le j\le k.
\]
The latter is precisely the condition for the corresponding instance of $\ZOE$ to be positive.

 Furthermore, since it is straightforward
to write $w_{\lambda,k}$, the time to generate the instance of $\SSP$
is proportional to 
\begin{align*}
|g_1|+\ldots+|g_k|+|g| &\le (k+1)|g|\le (k+1)(k|w_{k,k}|).
\end{align*}
Taking~\eqref{eq:ckl} into account, we see that the above is clearly polynomial in~$k$.
Therefore, $\ZOE$ can be reduced to $\SSP(F)$ in polynomial time.
Thus, $\SSP(F)$ is $\NP$-hard.
\end{proof}

Since the word problem in $F$ is decidable in polynomial time, the 
following holds:

\begin{corollary}
$\SSP(F)$ is $\NP$-complete.
\end{corollary}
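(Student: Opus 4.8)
The plan is to combine the $\NP$-hardness already established with a routine argument that $\SSP(F)$ lies in $\NP$. The hard direction is precisely Proposition~\ref{prop:abelian_np_hard}, so only membership in $\NP$ remains to be addressed.

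For membership, given an instance $(g_1,\ldots,g_k,g)$ of $\SSP(F)$, I would use the tuple $(\varepsilon_1,\ldots,\varepsilon_k)\in\{0,1\}^k$ as the nondeterministic certificate; its length is polynomial in the input. Verifying a guessed certificate reduces to checking the single group equality
\[
g_1^{\varepsilon_1}\cdots g_k^{\varepsilon_k}g^{-1}=1
\]
in $F$, which is an instance of the word problem on a word of length at most $|g_1|+\cdots+|g_k|+|g|$, hence polynomial in the input size. Since the word problem in $F$ is decidable in polynomial time, this verification is polynomial, so $\SSP(F)\in\NP$; combined with $\NP$-hardness this yields $\NP$-completeness.

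The only step deserving care is the polynomiality of the word problem in $F=\MZ\ltimes\MZ^n$, which is exactly where the exponential distortion could in principle cause trouble. Writing elements in the normal form $x^m v$ with $v\in\MZ^n$, multiplication requires applying powers $X^{\pm t}$ of the defining matrix $X\in\SL_n(\MZ)$. The entries of $X^t$ grow exponentially in $t$, but every exponent $t$ arising during collection is bounded by the length of the input word, so these entries have only polynomially many bits and can be computed by repeated squaring; collecting a word into normal form and comparing it with the identity is then polynomial. As this fact is asserted just before the statement, I would simply invoke it rather than reprove it.
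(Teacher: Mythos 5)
Your proposal matches the paper's argument: the corollary follows immediately from Proposition~\ref{prop:abelian_np_hard} together with the observation that the word problem in $F$ is polynomial-time decidable, which gives membership in $\NP$ via the obvious certificate. Your extra remarks on why the word problem in $\MZ\ltimes\MZ^n$ is polynomial (repeated squaring, polynomially many bits in the entries of $X^t$) are correct and simply spell out what the paper leaves as a one-line assertion.
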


\begin{remark}\label{rem:they_are_in_H}
Note that the elements $w_{\lambda,k}$ involved in the above reduction belong to the ``bottom'' subgroup $\mathbb Z^n$ of $F=\MZ\ltimes \MZ^n$.
\end{remark}

\section{$\SSP$ in polycyclic groups}\label{sec:general_case}
Now we turn to the case of an arbitrary polycyclic group $G$. 

\begin{theorem}\label{th:poly-np-complete}
Let $G$ be non-virtually nilpotent polyclic group.
Then $\SSP(G)$ is $\NP$-complete.
\end{theorem}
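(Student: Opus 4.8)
The plan is to assemble the three ingredients already built in the excerpt into a polynomial-time reduction from $\ZOE$. By Proposition~\ref{pr:abelian-by-cyclic-inside}, since $G$ is polycyclic but not virtually nilpotent, there is an element $x\in G$ and normal nilpotent subgroups $K\le H\le G$ with $H/K$ infinite abelian and $\overline{G}:=\langle x,H\rangle/K$ not virtually nilpotent. First I would observe that $\overline{G}$ is (essentially) an abelian-by-cyclic group of the form $F=\MZ\ltimes\MZ^n$ with $\MZ^n$ exponentially distorted: the quotient $\langle x,H\rangle/K$ is generated by the image of $x$ acting by conjugation on the free abelian group $H/K$, and by Proposition~\ref{pr:big_eigenvalue} the corresponding matrix $X\in\SL_n(\MZ)$ must have an eigenvalue of absolute value $>1$, exactly the hypothesis under which Section~\ref{sec:abelian_by_cyclic} applies. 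Thus Proposition~\ref{prop:abelian_np_hard} provides a polynomial-time reduction from $\ZOE$ to $\SSP(F)$.

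The core of the argument is then to lift this reduction from the quotient $F=\langle x,H\rangle/K$ back to the genuine group $G$. Given a $\ZOE$ instance, Proposition~\ref{prop:abelian_np_hard} produces elements $g_1,\ldots,g_k,g$ whose defining words $w_{\lambda,i}$ lie, by Remark~\ref{rem:they_are_in_H}, in the ``bottom'' abelian subgroup $H/K$. I would lift each $w_{\lambda,i}$ to an explicit word $\widetilde w_{\lambda,i}$ over the generators of $G$ representing an element of $H$, and form the lifted instance $\widetilde g_1,\ldots,\widetilde g_k,\widetilde g$ of $\SSP(\langle x,H\rangle)$, hence of $\SSP(G)$. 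The reduction is correct in one direction trivially: a solution downstairs in $G$ projects to a solution in $F$. For the converse I need that a solution in $F$ lifts, i.e.\ that the equation $\widetilde g=\widetilde g_1^{\varepsilon_1}\cdots\widetilde g_k^{\varepsilon_k}$ that holds modulo $K$ can be arranged to hold on the nose in $G$. This is where the nilpotent machinery of Section~\ref{th:norm_forms} enters: since the $w_{\lambda,i}$ all lie in the abelian quotient $H/K$, reordering them introduces only correction terms lying in $K=[H,H]$ (or deeper in the lower central series of the nilpotent group $H$), and Proposition~\ref{pr:permutation} bounds the exponents of these iterated-commutator correction terms polynomially in $k$. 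One then augments the instance with additional generators that can absorb or cancel these $K$-valued discrepancies, so that the $\SSP$ instance over $G$ is positive precisely when the $F$-instance is.

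The main obstacle I expect is exactly this lifting step: controlling the commutator corrections in $H$ and ensuring they can be cancelled within the subset-sum format of $\{0,1\}$ exponents without destroying the equivalence. Because $H$ is only nilpotent rather than abelian, multiplying the lifts $\widetilde w_{\lambda,i}$ in a permuted order yields a product equal to the ordered product times a word in iterated commutators of the $\widetilde w_{\lambda,i}$; Proposition~\ref{pr:permutation} guarantees these commutator exponents are at most $P(k)$ in absolute value, so only polynomially many auxiliary generators (with polynomially bounded exponents) are needed to neutralize them. I would therefore pad the instance with elements representing these correction commutators and their inverses, chosen so that a $\{0,1\}$-selection can realize the required cancellation; the polynomial bound on the number and size of these elements, together with the polynomial size of the original $w_{\lambda,i}$ from~\eqref{eq:ckl}, keeps the whole reduction polynomial.

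Finally, membership in $\NP$ is immediate: $\SSP(G)$ is in $\NP$ because a witness is a choice of $\varepsilon_1,\ldots,\varepsilon_k\in\{0,1\}$, and the word problem in the polycyclic group $G$ is decidable in polynomial time (via Malcev coordinates / matrix representations), so the candidate equality~\eqref{eq:SSP-def} can be verified efficiently. Combining $\NP$-hardness from the reduction with membership in $\NP$ yields that $\SSP(G)$ is $\NP$-complete, completing the proof of Theorem~\ref{th:poly-np-complete}.
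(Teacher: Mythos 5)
Your proposal is correct and follows essentially the same route as the paper's own proof: reduce $\ZOE$ to $\SSP$ of the abelian-by-cyclic quotient $F=\langle x,H\rangle/K$ via Propositions~\ref{pr:abelian-by-cyclic-inside} and~\ref{prop:abelian_np_hard}, lift the words to $\langle x,H\rangle$ using Remark~\ref{rem:they_are_in_H}, control the reordering discrepancies (which lie in $K=[H,H]$) with the polynomial commutator bound of Proposition~\ref{pr:permutation}, pad the instance with polynomially many copies of these commutators and their inverses, and conclude $\NP$-membership from the polynomial-time word problem. The only detail worth making explicit is why a positive $F$-instance forces $g_1^{\varepsilon_1}\cdots g_k^{\varepsilon_k}$ and $g$ to be products of the \emph{same} multiset of factors $w_{\lambda,i}$ (each occurring exactly once, by the coefficient analysis in~\eqref{eq:linear_comb}), which is what reduces the lifting problem to a pure reordering question.
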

\begin{proof}
Since word problem in $G$ is polynomial time decidable, it suffices to show that $\SSP(G)$ is $\NP$-hard. For that, we show that the reduction of $\ZOE$ to the subset sum problem in an abelian-by-cyclic group described in Section~\ref{sec:abelian_by_cyclic} can be refined to deliver a reduction to $\SSP(G)$.

Let $x\in G$ and normal subgroups $K\le H$ of $G$ be as provided by Proposition~\ref{pr:abelian-by-cyclic-inside}. Passing to a subgroup of $H/K$, we may assume that the group $F=\langle x, H\rangle/K$ is (free abelian)-by-cyclic, as specified in Section~\ref{sec:abelian_by_cyclic}.

Let a $k\times k$ matrix $Z=(a_{ji})$ be given as an input of $\ZOE$, let $g'_1,\ldots,g'_k,g'\in F$ be the equivalent input of $\SSP(F)$, and let $w'_{\lambda,1},\ldots, w'_{\lambda,k}$, with $\lambda=k$, be the corresponding elements of $F$ involved in the construction, as chosen in the proof of Proposition~\ref{prop:abelian_np_hard}:
\[
g'_i = {w'_{\lambda,1}}^{a_{1i}}\ldots {w'_{\lambda,k}}^{a_{ki}}
\ \mbox{ and }\ 
g' = w'_{\lambda,1} \ldots w'_{\lambda,k}.
\]
Fix representatives $w_{\lambda,k}\in\langle x, H\rangle$ of $w'_{\lambda,k}$: $w'_{\lambda,1}=w_{\lambda,1}K,\ \ldots,\  w'_{\lambda,k}=w_{\lambda,k}K$. Consequently, fix representatives of $g'_i,g'$ as follows:
\[
g_i = {w}_{\lambda,1}^{a_{1i}}\ldots {w}_{\lambda,k}^{a_{ki}}
\ \mbox{ and }\ 
g = w_{\lambda,1} \ldots w_{\lambda,k}.
\]
Note that we may assume that elements in $\langle x,H\rangle/K$ are encoded by words in generators of $\langle x,H\rangle$, so we can neglect the time required to choose elements $w_{\lambda,i},g_i,g$.

By construction, if the equality
\[
{g_1'}^{\varepsilon_1}\cdots {g_k'}^{\varepsilon_k}=g'
\]
takes place in $F$, then
\[
\sum_{i=1}^k a_{ji}\varepsilon_i=1\ \mbox{ for every }\ 1\le j\le k,
\]
or, in other words, each factor $w'_{\lambda,i}$ occurs in the product ${g_1'}^{\varepsilon_1}\cdots {g_k'}^{\varepsilon_1}$ exactly once. By the choice of $w_{\lambda,i}$ and $g_i$, the same is true for factors $w_{\lambda,i}$ in the product $g_1^{\varepsilon_1}\cdots g_k^{\varepsilon_1}$, that is, the latter and the element $g$ are products of the same factors $w_{\lambda,1},\ldots, w_{\lambda,k}$ in, perhaps, different orders.

Recall that $w_{\lambda,i}\in H$ by Remark~\ref{rem:they_are_in_H} and therefore generate a nilpotent subgroup $H_0$ of $H$.
Let $c_0\le c$ be the nilpotency classes of $H_0\le H$, respectively.
Since $H_0$ is nilpotent, the $w_{\lambda,i}$ factors in the product $g_1^{\varepsilon_1}\cdots g_k^{\varepsilon_k}$
can be rearranged as
\[
g_1^{\varepsilon_1}\cdots g_k^{\varepsilon_k}\cdot h_1^{\alpha_1}\cdots h_m^{\alpha_m}=
w_{\lambda,1}\cdots w_{\lambda,k}=g,
\]
where $h_1,\ldots,h_m$ are iterated commutators of $w_{\lambda,1},\ldots,w_{\lambda,k}$ up to weight $c_0\le c$. Since there are $k$ factors $w_{\lambda,i}$ in the product $g_1^{\varepsilon_1}\cdots g_k^{\varepsilon_k}$, by Proposition~\ref{pr:permutation}, there is a polynomial $P$ that only depends on $c$ such that each $|\alpha_1|,\ldots,|\alpha_m| $ can be taken to not exceed $P(k)$.

Therefore, if the instance $g_1',\ldots,g_k',g'$ of $\SSP(F)$ is positive, then the instance
\begin{equation}\label{eq:new_inst}
g_1,\ldots,g_k,\underbrace{h_1,\ldots,h_1}_{P(k)},\underbrace{h_1^{-1},\ldots,h_1^{-1}}_{P(k)},\ldots, \underbrace{h_m,\ldots,h_m}_{P(k)},\underbrace{h_m^{-1},\ldots,h_m^{-1}}_{P(k)}, g
\end{equation}
of $\SSP(G)$ is positive. The converse is immediate since $H/K$ is abelian and therefore $h_i\in K$. It follows that the instance $Z$ of $\ZOE$ is positive if and only if~\eqref{eq:new_inst} is a positive instance of $\SSP(G)$.

It is only left to observe that there are at most $(2k)^c+(2k)^{c-1}+\ldots +(2k)^2$ iterated commutators of $2k$ elements $w_{\lambda,i}^{\pm1}$, each of length at most $2^c |w_{\lambda,k}|$, so the tuple~\eqref{eq:new_inst} can be constructed in time which is polynomial in the size of the original input $Z$. By Section~\ref{sec:abelian_by_cyclic}, this gives a reduction of $\ZOE$ to $\SSP(G)$. 
\end{proof}

\begin{corollary}
Let $G$ be a polycyclic group. If $G$ is virtually nilpotent,
then $\SSP(G) \in\P$.
Otherwise $\SSP(G)$ is $\NP$-complete.
\end{corollary}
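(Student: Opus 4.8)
The plan is to recognize this corollary as a direct assembly of two facts that are, at this point, already in hand, together with the observation that the two cases it distinguishes are exhaustive. Every polycyclic group $G$ is either virtually nilpotent or it is not, so it suffices to settle each alternative by a single appeal.

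For a polycyclic $G$ that is \emph{not} virtually nilpotent, I would simply invoke Theorem~\ref{th:poly-np-complete}, whose hypothesis is exactly this and whose conclusion is exactly that $\SSP(G)$ is $\NP$-complete. Nothing further is needed, since that theorem already packages both halves of $\NP$-completeness: $\NP$-hardness comes from the reduction of $\ZOE$ refined in Section~\ref{sec:general_case}, and membership in $\NP$ comes from the polynomial-time solvability of the word problem in $G$ (which lets one verify a guessed choice of exponents $\varepsilon_i\in\{0,1\}$ efficiently).

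For a polycyclic $G$ that \emph{is} virtually nilpotent, I would appeal to the known result of~\cite{Miasnikov-Nikolaev-Ushakov:2014a}, that every virtually nilpotent group has polynomial time decidable subset sum problem; as such a $G$ is in particular virtually nilpotent, this gives $\SSP(G)\in\P$ immediately. Thus there is no real obstacle at the level of the corollary itself: its entire mathematical content resides in Theorem~\ref{th:poly-np-complete}, and the corollary is the bookkeeping remark that the dichotomy ``virtually nilpotent versus not'' is exhaustive and that each side has already been resolved.
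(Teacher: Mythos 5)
Your proposal is correct and matches the paper's own proof exactly: the virtually nilpotent case is handled by citing \cite[Theorem 3.3]{Miasnikov-Nikolaev-Ushakov:2014a}, and the remaining case follows immediately from Theorem~\ref{th:poly-np-complete}. Nothing further is needed.
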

\begin{proof} If $G$ is virtually nilpotent, the subset sum problem has polynomial time solution by~\cite[Theorem 3.3]{Miasnikov-Nikolaev-Ushakov:2014a}. If $G$ is not virtually nilpotent, the statement immediately follows by Theorem~\ref{th:poly-np-complete}.
\end{proof}

Therefore, $\SSP(G)$ is $\NP$-hard for any group $G$ that contains a not virtually nilpotent polycyclic subgroup and $\NP$-complete if, additionally, the word problem for $G$ is polynomial time decidable. In particular, the subset sum problem is $\NP$-complete for any virtually polycyclic group that is not virtually nilpotent.

\section{Open questions}\label{sec:questions}

In conclusion, we would like to formulate several open problems related to our work.
First, given that $\SSP(G)$ is polynomial time decidable in virtually nilpotent groups,  $\NP$-complete in otherwise polycyclic groups by Theorem~\ref{th:poly-np-complete}, $\NP$-complete in $BS(1,2)$ and free metabelian groups by~\cite{Miasnikov-Nikolaev-Ushakov:2014a}, and $\NP$-complete in lamplighter group by~\cite{Misch-Tr:2016b}, it is only natural to make the following conjecture:

\medskip\noindent{\bf Conjecture.} Let $G$ be a finitely generated 
(non virtually nilpotent) metabelian group. Then $\SSP(G)$ is $\NP$-complete.

\medskip
The next logical step is to study the subset sum problem in solvable groups. Taking into account that not all solvable groups have decidable word problem, we can state the following question.

\medskip\noindent{\bf Question.} Let $G$ be a finitely generated (non virtually nilpotent) solvable group with polynomial time decidable word problem. Is it true that $\SSP(G)$ is $\NP$-complete?

\medskip
Third, while not immediately related to the present paper, the following question is interesting and still open.

\medskip
\noindent{\bf Question.} What is the complexity of the subset sum problem in the first Grigorchuk's group?

\bibliography{main_bibliography}

\end{document}